\newtheorem{theorem}{Theorem}
\newcommand{\hatzero}{\hat{0}}
\newcommand{\myempty}{\varnothing}
\begin{document}

\title{An extension of Birkhoff's representation theorem to
infinite distributive lattices}
\author{Dale R. Worley}
\email{worley@alum.mit.edu}
\date{13 March 2023}

\begin{abstract}
Birkhoff's representation theorem for 
finite distributive lattices states that any finite distributive lattice
is isomorphic to the lattice of order ideals (lower sets) of the
partial order of
the join-irreducible elements of the lattice.
This theorem can be extended as follows:
A {\it non-finite} distributive lattice that is {\it locally finite} and
{\it has a $\hatzero$} is isomorphic to the lattice of {\it finite}
order ideals of the partial order of
the join-irreducible elements of the lattice.
In addition, certain {\it ``well ordering''} properties are shown
to be equivalent to the premises of the extended theorem.
\end{abstract}

\maketitle

\textit{Note added to the 2\textsuperscript{nd} version:}  Since the
publication of the
1\textsuperscript{st} version, I have been informed that the main result of this
paper (\ref{th:extension}) was previously published as
\cite{Stan2012}*{Prop. 3.4.3}.

Birkhoff's representation theorem for finite distributive lattices has
long been known:

\begin{theorem}[Birkhoff \cite{Birk1937}\cite{Birk1967}*{ch.\ III sec.\ 3 p.\ 58}]
Any finite distributive lattice $L$ is isomorphic to the
lattice of order ideals of the partial order of the join-irreducible
elements of $L$.
\end{theorem}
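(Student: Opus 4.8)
The plan is to exhibit an explicit isomorphism and verify it directly. Write $J(L)$ for the poset of join-irreducible elements of $L$ (those $x \neq \hatzero$ that cannot be written as $x = a \vee b$ with $a, b < x$), ordered by the restriction of the order of $L$, and let $\mathcal{O}(J(L))$ denote the lattice of order ideals of $J(L)$ under union and intersection. I would define $\phi \colon L \to \mathcal{O}(J(L))$ by $\phi(x) = \{\, p \in J(L) : p \le x \,\}$. It is immediate that $\phi(x)$ is a lower set and that $\phi$ is order-preserving; the real work is to show that $\phi$ is a bijection whose inverse also preserves order, since an order isomorphism between two lattices automatically preserves all meets and joins and is therefore a lattice isomorphism.

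The first key step, which uses only finiteness, is the lemma that every $x \in L$ equals the join of the join-irreducibles beneath it, $x = \bigvee \phi(x)$, with the empty join read as $\hatzero$. I would prove this by taking, toward a contradiction, a minimal element $x$ failing the identity; such an $x$ cannot itself be join-irreducible, so $x = a \vee b$ with $a, b < x$, and minimality applied to $a$ and $b$ rewrites $x$ as a join of join-irreducibles below it, a contradiction. Injectivity of $\phi$ then follows at once, since $\phi(x) = \phi(y)$ gives $x = \bigvee \phi(x) = \bigvee \phi(y) = y$; and the same lemma shows that $\phi(x) \subseteq \phi(y)$ forces $x = \bigvee \phi(x) \le \bigvee \phi(y) = y$, so $\phi^{-1}$ will be order-preserving once surjectivity is established.

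The decisive step, and the only place the distributive hypothesis is needed, is surjectivity. Given an order ideal $I$, I set $x = \bigvee I$; clearly $I \subseteq \phi(x)$, and it remains to prove the reverse inclusion. Suppose $p \in J(L)$ with $p \le x = \bigvee I$; writing $I = \{p_1, \dots, p_k\}$, which is legitimate because $L$ is finite, and applying distributivity gives
\[
p = p \wedge (p_1 \vee \cdots \vee p_k) = (p \wedge p_1) \vee \cdots \vee (p \wedge p_k).
\]
Since $p$ is join-irreducible it must equal one of the terms, say $p = p \wedge p_i$, whence $p \le p_i$; as $I$ is a lower set containing $p_i$, we conclude $p \in I$. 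Thus $\phi(x) = I$ and $\phi$ is onto.

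Combining these facts, $\phi$ is an order-preserving bijection with order-preserving inverse, hence the desired lattice isomorphism $L \cong \mathcal{O}(J(L))$. I expect the main obstacle to be precisely the surjectivity argument, since it is the one point where distributivity is indispensable: making the join-irreducibility of $p$ interact correctly with the finite join $\bigvee I$ requires the distributive expansion above, and without it the inclusion $\phi(x) \subseteq I$ can fail. By contrast, the minimal-counterexample lemma is routine once finiteness is invoked, and the full lattice-homomorphism property comes for free from the order isomorphism rather than needing separate verification for meets and joins.
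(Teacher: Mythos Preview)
Your proof is correct and follows essentially the same approach as the paper: the paper's proof of this theorem is only a two-sentence sketch exhibiting the very maps $x \mapsto \{\,p \in J(L) : p \le x\,\}$ and $S \mapsto \bigvee S$ that you use, and the details you supply (every element is a join of the join-irreducibles below it; distributivity forces $p \le \bigvee I$ to imply $p \le p_i$ for some $i$) are exactly those the paper spells out in its proof of the extended Theorem~\ref{th:extension}, where the latter step is phrased as ``join-primality.''
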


\begin{proof}[Proof sketch]
The proof is to demonstrate that
there is a one-to-one order-preserving correspondence between
elements of $L$ and order ideals of the partial order.
The order ideal
corresponding to an element $x \in L$ is simply the set of
join-irreducible elements of $L$ that are $\le x$, and
the element of $L$ corresponding to an order ideal $S$ of join-irreducible
elements is $\bigvee S$.
\end{proof}

(Note that in all of this discussion, it is required that $\hatzero$
in $L$ corresponds to the empty order ideal of the partial order.
If $\hatzero$ is seen as join-irreducible, then both $\myempty$ and
$\{\hatzero\}$ are order ideals, which does not match the structure of $L$.
So we define that an element $x$ is
``join-irreducible'' if  it cannot be expressed as the join of a {\it
finite} set of smaller elements; thus $\hatzero$ is the join of {\it
zero} elements and not join-irreducible.)

The motivation for this extension is that in the combinatorics of
tableaux, the distributive lattice of tableaux is formally defined as
the lattice of {\it finite} order ideals of an {\it infinite} base
poset.
The archetype example is that the {\it Young lattice} of {\it Ferrers
diagrams} equiv.\footnote{Two equivalent terms or conditions are
conjoined with ``equiv.''} {\it (unshifted) shapes} is the finite
order ideals of
the quadrant of the plane, $\mathbb{P}^2$.\footnote{I use $\mathbb{P}$
for $\{1, 2, 3, \ldots\}$ and $\mathbb{N}$ for $\{0, 1, 2, \ldots\}$.}
Another example is that the lattice of {\it shifted shapes} is the finite
order ideals of the octant of the plane, $\{(x,y): x,y \in \mathbb{P}
\text{ and } x \ge y\}$.
In these cases, the poset generating the lattice is already known, of
course, but it's clear that the construction of a lattice from a poset
is a general pattern.
(See e.g. \cite{Fom1994}*{sec.\ 2 and the $J(\cdot)$ operator}.)

The extension theorem shows that this construction is universal.

\begin{theorem}\label{th:extension}
If a distributive lattice $L$ is locally finite and has a $\hatzero$,
then $L$ is isomorphic to the lattice of finite order ideals of the
partial order
of the join-irreducible elements of $L$.
\end{theorem}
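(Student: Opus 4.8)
The plan is to imitate the finite proof, building the order-isomorphism explicitly and spending local finiteness exactly where the finite proof used finiteness of the whole lattice. Write $P$ for the poset of join-irreducible elements of $L$, and let $\mathcal{J}$ denote the lattice of finite order ideals of $P$ ordered by inclusion. I would define $\phi\colon L \to \mathcal{J}$ by $\phi(x) = \{\, j \in P : j \le x \,\}$ and $\psi\colon \mathcal{J} \to L$ by $\psi(S) = \bigvee S$, the latter being a join over a finite set and hence existing in $L$. First I would verify these land where claimed. The set $\phi(x)$ is a lower set in $P$ (if $j' \le j \le x$ with $j'$ join-irreducible then $j' \le x$), and it is finite because every $j \in \phi(x)$ lies in the interval $[\hatzero, x]$, which is finite by local finiteness together with the existence of $\hatzero$; hence $\phi(x) \in \mathcal{J}$. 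That $\psi(S) \in L$ is immediate.

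Next I would show the maps are mutually inverse. For $\psi \circ \phi = \mathrm{id}_L$ the crux is that $x = \bigvee \phi(x)$, i.e.\ every element is the join of the join-irreducibles below it. Rather than argue this directly in the infinite lattice, I would pass to the finite interval $[\hatzero, x]$, which is a finite distributive sublattice of $L$. One checks that the elements of $[\hatzero, x]$ join-irreducible \emph{within} $[\hatzero, x]$ are exactly the elements of $\phi(x)$: for $j \le x$ the join-irreducibility condition, and the set of elements below $j$, are computed identically in $L$ and in $[\hatzero, x]$. Birkhoff's representation theorem for finite lattices, applied to $[\hatzero, x]$, then gives $x = \bigvee \phi(x)$, which is exactly $\psi(\phi(x)) = x$.

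For $\phi \circ \psi = \mathrm{id}_{\mathcal{J}}$, fix a finite order ideal $S$ and set $x = \bigvee S$. The inclusion $S \subseteq \phi(x)$ is clear. For the reverse inclusion I would use that in a distributive lattice every join-irreducible element is join-prime: if $j$ is join-irreducible and $j \le a \vee b$, then $j = j \wedge (a \vee b) = (j \wedge a) \vee (j \wedge b)$, so $j \le a$ or $j \le b$. Induction on $|S|$ then shows that any join-irreducible $j \le \bigvee S$ satisfies $j \le s$ for some $s \in S$; since $S$ is a lower set and $j \in P$, this forces $j \in S$, so $\phi(x) \subseteq S$. (Here I would also record the routine equivalence noted in the excerpt: for $j \neq \hatzero$, being the join of a finite set of strictly smaller elements is the same as being the join of two strictly smaller elements, so the paper's definition of join-irreducible matches the binary one used above.)

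Finally, $\phi$ is order-preserving ($x \le y \Rightarrow \phi(x) \subseteq \phi(y)$) and its inverse $\psi$ is order-preserving ($S \subseteq T \Rightarrow \bigvee S \le \bigvee T$); a bijection that preserves order in both directions is a lattice isomorphism, completing the proof. The main obstacle is the step $x = \bigvee \phi(x)$ in the non-finite setting: the whole force of the hypotheses is spent there, since it is the finiteness of $[\hatzero, x]$—guaranteed jointly by local finiteness and the presence of $\hatzero$—that both makes $\phi(x)$ a legitimate finite ideal and lets me invoke the finite theorem on the interval. (That $\hatzero$ cannot be dropped is shown by $\mathbb{Z}$ under its usual order, a locally finite chain in which infinitely many join-irreducibles lie below a given element.)
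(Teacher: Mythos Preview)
Your proof is correct and follows essentially the same approach as the paper's: the same pair of maps $\phi$ and $\psi$, the same use of join-primality to get $\phi\circ\psi=\mathrm{id}$, and the same reliance on the finiteness of $[\hatzero,x]$ to get $\psi\circ\phi=\mathrm{id}$. The only cosmetic difference is in this last step: you invoke Birkhoff's finite theorem as a black box on the interval $[\hatzero,x]$, whereas the paper argues directly that repeated join-decomposition in this finite interval must terminate at join-irreducibles.
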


\begin{proof}

The proof is to demonstrate that there
is a one-to-one order-preserving correspondence between
elements of $L$ and finite order ideals of the partial order.
Composing these two maps in either order gives the identity map on the
appropriate domain, showing that they are the desired correspondence
and its inverse.

The order ideal
corresponding to an element $x \in L$ is the set of
join-irreducible elements of $L$ that are $\le x$, and
the element of $L$ corresponding to a finite order ideal $S$ of join-irreducible
elements is $\bigvee S$.

For any element $x \in L$, let $S$ be the
join-irreducible elements in $L$ that are $\le x$, which is an order
ideal of the join-irreducible elements.
Clearly the mapping from $x$ to $S$ is order-preserving.
$S$ is finite because it is a subset of the interval $[\hatzero, x]$.

Now let $y$ be the join of $S$.
We need to show $y = x$.
As a join of elements $\le x$, $y \le x$.
Since $[\hatzero, x]$ is finite, any decomposition of $x$
into joins must terminate, so $x$ is the join of some finite set $I$
of join-irreducibles in $[\hatzero, x]$.
$I \subset S$, so $x = \bigvee I \le \bigvee S = y$.

For any finite order ideal $S$ of join-irreducible elements of $L$, let $x$
be $\bigvee S$.
Clearly this mapping is order-preserving.
Let $T$ be the set of join-irreducible
elements $\le x$.  We need to show $S = T$.

Every element of $S$ clearly belongs to $T$.
Conversely, any element of $T$ is a join-irreducible element $\le x$
and must (by the join-primality of the elements of the finite set $S$,
which is implied by join-irreducibility in
distributive lattices) be $\le$ one of the members
of $S$, and therefore must itself (by the assumption that
$S$ is an order ideal) belong to $S$.

\end{proof}

While determining the antecedents of Theorem~\ref{th:extension}, I
found that certain ``well ordering'' properties are alternative
antecedents:

\begin{theorem}
For distributive lattice $L$, the conditions
\begin{enumerate}
  \item
$L$ is well-partially-ordered equiv.
$L$ contains no infinite descending chains equiv.
there does not exist $a_1, a_2, a_3, ... \in L$ such that
$a_1 > a_2 > a_3 > ...$, and
\item
$L$ contains no bounded infinite ascending chains equiv.
there does not exist $a_1, a_2, a_3, ..., b \in L$ such that
$a_1 < a_2 < a_3 < ... < b$,
\end{enumerate}
are equivalent to the conditions that $L$ is locally finite and has a
$\hatzero$.
\end{theorem}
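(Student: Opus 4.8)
The statement is a biconditional, and I would prove each implication separately, reading condition~(1) as the descending chain condition (DCC)---that is, I take ``well-partially-ordered'' here to be synonymous with ``no infinite descending chain,'' as the internal ``equiv.''s indicate. The implication from ``locally finite and has a $\hatzero$'' to (1) and (2) is the routine direction. For~(1), any chain $a_1 > a_2 > \cdots$ lies inside the interval $[\hatzero, a_1]$, which is finite, so the chain must terminate. For~(2), any chain $a_1 < a_2 < \cdots < b$ lies inside $[\hatzero, b]$, again finite, so it cannot be infinite. Thus both chain conditions fall out of confining the chains to a single finite interval.

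The substantive direction assumes (1) and (2) and must produce a $\hatzero$ and local finiteness. The $\hatzero$ is cheap: DCC guarantees that the nonempty set $L$ has a minimal element $m$, and for any $x$ the meet $m \wedge x \le m$ cannot be strictly below $m$ (that would contradict minimality), so $m \wedge x = m$, i.e.\ $m \le x$; hence $m$ is a least element. The real work is local finiteness, which I would reduce to the claim that every interval $[a,b]$ of $L$ is finite. Such an interval is a bounded distributive sublattice, and it inherits both chain conditions: DCC from~(1), and the \emph{ascending} chain condition because any infinite ascending chain inside $[a,b]$ is bounded above by $b$ and so would violate~(2). So the core lemma is: \emph{a bounded distributive lattice satisfying both ACC and DCC is finite.}

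To prove that core lemma I would pass to the poset $J$ of join-irreducible elements of $[a,b]$ and show $J$ is finite. It has no infinite chain, since ACC and DCC force every chain to be finite. The key step---and the place distributivity is essential---is that $J$ has no infinite antichain: given a putative antichain $p_1, p_2, \dots$, form the partial joins $c_n = p_1 \vee \cdots \vee p_n$. Because join-irreducibles are join-prime in a distributive lattice, $p_{n+1} \le c_n$ would force $p_{n+1} \le p_i$ for some $i \le n$, contradicting incomparability; hence $c_n < c_{n+1}$ strictly, and $c_1 < c_2 < \cdots \le b$ is a bounded infinite ascending chain, contradicting~(2). Since an infinite poset must contain an infinite chain or an infinite antichain (infinite Ramsey), $J$ is finite. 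Finally, DCC lets me write each $x \in [a,b]$ as a finite join of join-irreducibles below it, so $x = \bigvee \{p \in J : p \le x\}$, and the map $x \mapsto \{p \in J : p \le x\}$ embeds $[a,b]$ into the (finite) power set of $J$; therefore $[a,b]$ is finite.

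I expect the main obstacle to be exactly this core lemma, and within it the elimination of infinite antichains of join-irreducibles: ACC and DCC alone do not bound the size of a lattice (an infinite antichain with a top and bottom adjoined satisfies both, yet is infinite and not distributive), so distributivity must be used, and the partial-join construction is precisely what converts an infinite antichain into the bounded ascending chain that~(2) forbids. The remaining steps---the two routine chain arguments, the $\hatzero$ extraction, and the Birkhoff-style embedding into the power set of $J$---are standard once the finiteness of $J$ is in hand.
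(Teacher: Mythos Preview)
Your proof is correct and follows essentially the same route as the paper: the easy direction confines each forbidden chain to a finite interval, while the hard direction extracts $\hat 0$ from DCC and then shows each interval is finite by applying infinite Ramsey to its join-irreducibles, ruling out an infinite antichain via the strictly increasing partial-join trick (join-irreducible $\Rightarrow$ join-prime in a distributive lattice) and an infinite chain via the chain conditions. The only cosmetic difference is that you package the argument as an abstract ``bounded distributive lattice with ACC and DCC is finite'' lemma and finish with the explicit embedding $x \mapsto \{p \in J : p \le x\}$ into $2^{J}$, whereas the paper phrases the same step contrapositively (if the interval is infinite then the set of join-irreducibles is infinite).
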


\begin{proof}
Suppose $L$ satisfies the well-ordering conditions (1) and (2).
Then
it must have a minimal element, as otherwise an infinite descending chain could
be constructed.  Since $L$ is a lattice, there can be only one
minimal element, hence that element is $\hatzero$.

Let $I$ be the set of elements of $S = [x, y]$ that are
join-irreducible within $S$.
Every element of $S$ can be decomposed into
a finite join of elements of $I$.
If $S$ is infinite, then $I$ must be infinite
to allow constructing an
infinite number of different joins of elements of $I$.
Then by the infinite Ramsey theorem, $I$ would contain
either an infinite chain or an infinite antichain.

An infinite chain must be well-ordered (because $L$ contains no infinite
descending chains), and any infinite well-ordered set contains an initial subset
that is order-isomorphic to $\mathbb{N}$.
This subset would be an
infinite ascending chain, all of whose elements are $< y$, which
contradicts (2).

An infinite antichain would contain a countably infinite subset
$i_1, i_2, i_3, ...$ of incomparable elements of $I$.
Construct $i_1, i_1 \vee i_2, i_1 \vee i_2 \vee i_3, ...$\ .
Each of these is strictly greater than the previous, because if
$$\bigvee_{j=1}^n i_j = \bigvee_{j=1}^{n-1} i_j$$ then
$$i_n \vee \bigvee_{j=1}^{n-1} i_j = \bigvee_{j=1}^{n-1} i_j$$
$$i_n \le \bigvee_{j=1}^{n-1} i_j$$
$$i_n \le i_j \text{ for some } j < n \text{, because }
\bigvee_{j=1}^{n-1} i_j
\text{ is a finite join of join-irreducibles}$$
contradicting that the $i_j$ form an antichain.
Thus the $\bigvee_{j=1}^n i_j$ form an infinite ascending chain, all
of which are $< x$, which is assumed not to exist.
Thus, $S$ is finite.

Conversely, suppose $L$ is locally finite and has a $\hatzero$.
There can be no infinite
descending chain $a_1 > a_2 > a_3 > ... \in L$, because it would be
contained in $[\hatzero, a_1]$, which is finite.
Similarly, there can be no bounded infinite ascending chain 
$a_1 < a_2 < a_3 < ... < b \in L$, because it would be
contained in $[a_1, b]$, which is finite.
\end{proof}

\begin{biblist}[\normalsize]*{labels={alphabetic}}

\bib{Fom1994}{article}{
	label = {Fom1994},
	author = {Fomin, Sergey},
	title = {Duality of Graded Graphs},
	journal = {Journal of Algebraic Combinatorics},
	volume = {3},
	date = {1994},
	pages = {357--404},
	doi = {doi:10.1023/A:1022412010826},
	eprint = {https://link.springer.com/content/pdf/10.1023/A:1022412010826.pdf},
	note = {https://scholar.google.com/scholar?cluster=3401296478290474488},
}

\bib{Birk1937}{article}{
	label = {Birk1937},
	author = {Birkhoff, Garrett},
	title = {Rings of sets},
	journal = {Duke Mathematical Journal},
	volume = {3},
	date = {1937},
	pages = {443--454},
	doi = {doi:10.1215/S0012-7094-37-00334-X},
	eprint = {https://projecteuclid.org/journals/duke-mathematical-journal/volume-3/issue-3/Rings-of-sets/10.1215/S0012-7094-37-00334-X.short},
	note = {https://scholar.google.com/scholar?cluster=10180976689018188837},
}

\bib{Birk1967}{book}{
	label = {Birk1967},
	author = {Birkhoff, Garrett},
	title = {Lattice theory},
	date = {1967},
	publisher = {American Mathematical Society},
        address = {Providence},
	note = {Original edition 1940.  available at https://archive.org/details/latticetheory0000birk https://scholar.google.com/scholar?cluster=10180976689018188837},
}

\bib{Stan2012}{book}{
	label={Stan2012},
	title = {Enumerative Combinatorics, Volume 1},
	author = {Stanley, Richard P.},
	date = {1997, 2012},
	publisher = {Cambridge University Press},
	address = {Cambridge},
	series = {Cambridge Studies in Advanced Mathematics},
	volume = {49},
	edition = {2},
}

\end{biblist}

\end{document}